\tikzstyle{bsq}=[rectangle, draw, thick, minimum width=1cm, minimum height=1cm]
\tikzstyle{bver}=[rectangle, draw, thick, minimum width=1cm, minimum height=2cm]
\tikzstyle{bhor}=[rectangle, draw, thick, minimum width=2cm, minimum height=1cm]
\newtheorem{theorem}{Theorem}[section]
\newtheorem{lemma}[theorem]{Lemma}
\newtheorem{corollary}[theorem]{Corollary}
\newtheorem{proposition}[theorem]{Proposition}
\newtheorem{varexample}[theorem]{Example}
\theoremstyle{definition}
\newtheorem{definition}[theorem]{Definition}
\newcommand{\ZZ}{\mathbb{Z}}
\newcommand{\trop}{\operatorname{trop}}
\newcommand{\val}{\operatorname{val}}
\newcommand{\outdeg}{\mathrm{outdeg}}
\newcommand{\gon}{\operatorname{gon}}
\newcommand{\supp}{\operatorname{supp}}
\title{Gonality of Expander Graphs}
\author{Neelav Dutta}
\author{David Jensen}
\begin{document}
\maketitle

\begin{abstract}
We provide lower bounds on the gonality of a graph in terms of its spectral and edge expansion.  As a consequence, we see that the gonality of a random 3-regular graph is asymptotically almost surely greater than one seventh its genus.
\end{abstract}

\section{Introduction}

In this paper, we study the relationship between the gonality of a graph $G$ and its expansion properties.  In particular, we provide lower bounds on the gonality $\gon (G)$ in terms of the Cheeger constant $h(G)$.  We refer the reader to Section \ref{Sec:Prelim} for definitions of these graph invariants.  Our main result is the following.

\begin{theorem}
\label{Thm:LowerBound}
For any $u \in (0, \frac{1}{2}]$, let $B_{u}(G)$ be the smallest degree of an effective divisor $D$ such that every connected component of $V(G) \smallsetminus \supp(D)$ has size at most $u \vert V(G) \vert $. Then
\[
gon(G)\geq \min \left\{ B_{u}(G), h(G)u \vert V(G) \vert \right\}.
\]
\end{theorem}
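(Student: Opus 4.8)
The plan is to prove the contrapositive: given an effective divisor $D$ with $r(D)\geq 1$ and $\deg(D)=\gon(G)$, show that $\deg(D)\geq B_u(G)$ or $\deg(D)\geq h(G)u|V(G)|$. Write $n=|V(G)|$. If every connected component of $V(G)\smallsetminus\supp(D)$ has size at most $un$, then $\deg(D)\geq B_u(G)$ by definition and we are done, so assume some component $C$ has $|C|>un$. It then suffices to produce some effective divisor $D'$ of degree at most $\gon(G)$ — a natural candidate being a chip-firing equivalent of $D$ — whose support complement has only components of size at most $un$, for then $B_u(G)\leq\deg(D')\leq\gon(G)$. Since $u\leq\tfrac12$, the Cheeger inequality applied to whichever of $C$, $V(G)\smallsetminus C$ has at most $n/2$ vertices gives $|\partial C|\geq h(G)\min\{|C|,\,n-|C|\}$, where $\partial C$ denotes the edge boundary of $C$; so if we are not already in the case $\deg(D)\geq h(G)un$ and moreover $n-|C|\geq un$, then $\sum_{w\in N(C)}D(w)\leq\deg(D)<|\partial C|=\sum_{w\in N(C)}(\text{edges from }w\text{ to }C)$, and hence some boundary vertex $w$ has strictly more edges into $C$ than it carries chips.

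The second tool is Dhar's burning algorithm together with positive rank. Note that $N(C)\subseteq\supp(D)$ and $D$ vanishes on $C$. Running the burning algorithm from a vertex $q\in C$, the fire consumes all of $C$ (connected and chip-free); if it then consumes the entire graph, $D$ is $q$-reduced, and $r(D)\geq 1$ forces $D(q)\geq 1$ (as $D-q$ is $q$-reduced and equivalent to an effective divisor, so $D(q)-1\geq 0$), contradicting $q\in C$. Hence the fire stops, exhibiting a nonempty unburnt set whose firing yields an equivalent effective divisor of the same degree and rank in which chips have migrated toward $q$. Iterating this reduction produces the $q$-reduced representative $D_q\sim D$ with $D_q(q)\geq 1$; thus $q$, and hence some vertex of $C$, lies in $\supp(D_q)$, so $C$ is no longer a full component of $V(G)\smallsetminus\supp(D_q)$ — it has been broken into pieces, each of size strictly less than $|C|$. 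The edge count from the previous paragraph enters here to control how the reduction can redistribute chips: only about $\deg(D)$ chips move, and the vertex $w$ with many edges into $C$ is forced to burn, which pins down where the fire can and cannot spread.

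To finish, I would choose $D$, among the finitely many effective divisors of degree $\gon(G)$ with positive rank, so as to minimize the size of the largest component of $V(G)\smallsetminus\supp(D)$, breaking ties by the number of components attaining that maximum. Replacing $D$ by $D_q$ for a suitable $q$ in a largest such component $C$ preserves the degree and positive rank and destroys $C$; using the bound $|\partial C|>\deg(D)$ to limit the chip mass that can agglomerate elsewhere during the reduction, one argues that no component of size $\geq|C|$ is created in the complement of the new support — unless $\deg(D)\geq h(G)un$, in which case we are again done. This makes the chosen invariant strictly decrease, the desired contradiction; therefore every component of $V(G)\smallsetminus\supp(D)$ has size at most $un$, i.e. $\deg(D)\geq B_u(G)$.

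The hard part will be exactly this last step: tracking the \emph{global} effect of a Dhar reduction on the component structure of $V(G)\smallsetminus\supp(D)$, and in particular ruling out that consolidating chips near $q$ merges distant components into a new large one. I expect this to need a careful bookkeeping of which vertices gain or lose chips during the reduction — exploiting that the successive unburnt sets are nested and disjoint from $C$ — combined with the inequality $|\partial C|>\deg(D)$ to keep the "chip-free mass" outside $C$ below the threshold $un$. The genuinely delicate case is the \emph{huge} component, $|C|>(1-u)n$, where $V(G)\smallsetminus C$ is too small for the Cheeger bound to give anything; there one must rely purely on the positive-rank/reduction mechanism (and on the weakness of the target bound $h(G)un$ in that regime), likely via a refined potential or an induction on degree or genus.
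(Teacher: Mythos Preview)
Your proposal has the right ingredients --- positive rank, Dhar reduction, a minimality argument, and the Cheeger bound --- but the way you combine them leaves a genuine gap, and the gap is precisely the part you flag as ``hard'': controlling whether the reduction creates a new large component elsewhere. You try to \emph{prevent} this from happening, and you concede you don't know how; the paper's proof instead \emph{exploits} it when it happens.

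Concretely, two choices in your outline make life harder than it needs to be. First, you pass directly from $D$ to its $q$-reduced representative $D_q$ and then try to compare the component structures of $V(G)\smallsetminus\supp(D)$ and $V(G)\smallsetminus\supp(D_q)$. The paper instead uses the factorization of the reduction into a chain of single subset-firings $A_1\subseteq A_2\subseteq\cdots$ (this is \cite[Lemma~1.3]{treewidth}) and looks at the \emph{first} step $i$ at which the picture changes. Second, your minimality criterion is ``smallest maximum component size, then fewest maximal components''; the paper's is ``$U_{D_0}$, the union of all large components, is minimal under inclusion''. With inclusion-minimality, at the first step $i$ where $U_{D_i}\neq U_{D_0}$ one cannot have $U_{D_i}\subsetneq U_{D_0}$, so a \emph{new} large component $U$ must appear --- one that meets $\supp(D_{i-1})$. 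Since $U$ is chip-free after firing $A_i$ but not before, $U\subseteq A_i$, giving $|A_i|\geq |U|>u\,n$. Meanwhile the large component containing the base vertex $v$ lies in $V(G)\smallsetminus A_i$, so $|V(G)\smallsetminus A_i|>u\,n$ as well. Now the Cheeger bound is applied to $A_i$ (not to any component of the support-complement), yielding $|\partial A_i|\geq h(G)\,u\,n$; and effectivity of $D_i$ forces $\deg(D)\geq |\partial A_i|$.

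This also dissolves the ``huge component'' case $|C|>(1-u)n$ that worried you: the Cheeger inequality is never applied to $C$ itself, only to the firing set $A_i$, which is automatically sandwiched between two large pieces. In short, the missing idea is not a bookkeeping argument that no new large component arises, but rather to detect the first moment one does and read off the cut from the firing set at that moment.
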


Note that $B_u (G)$ is decreasing in $u$, while $h(G)u \vert V(G) \vert$ is increasing in $u$.  As a consequence of Theorem \ref{Thm:LowerBound}, we obtain the following lower bound on the gonality of a random 3-regular graph.

\begin{theorem}
\label{Thm:RandomThreeRegular}
Let $G$ be a random 3-regular graph on $n$ vertices.  Then
\[
\gon(G) \geq 0.072n
\]
asymptotically almost surely.
\end{theorem}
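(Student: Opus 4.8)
The plan is to apply Theorem~\ref{Thm:LowerBound} to $G$ after establishing, for a random cubic graph, both a lower bound on $B_u(G)$ in terms of edge expansion and an explicit a.a.s.\ ``expansion profile.'' For $\alpha \in (0,\tfrac12]$ let $\rho(\alpha)$ denote the supremum of $\rho>0$ such that, asymptotically almost surely, every vertex set $S$ with $|S| = \alpha n$ satisfies $|\partial S| \ge \rho\,|S|$. I would pin down $\rho(\alpha)$ by a first-moment estimate: the expected number of sets $S$ with $|S| = \alpha n$ and $|\partial S| \le \rho\alpha n$ equals $\binom{n}{\alpha n}\,\Pr[X \le \rho\alpha n]$, where $X$ counts the edges leaving a fixed such $S$ in the configuration model, and Stirling's formula turns $\tfrac1n\log$ of this quantity into an explicit function of $(\alpha,\rho)$; its vanishing --- an equation balancing the entropy $-\alpha\log\alpha-(1-\alpha)\log(1-\alpha)$ against the large-deviation rate of $X$ at level $\rho\alpha n$ --- defines $\rho(\alpha)$. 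A union bound over $S$ with $|S| \le un$ (handling $|S| = o(n)$ crudely: single vertices have boundary $3$, and a standard ``no dense subgraph'' bound forces $|\partial S|$ close to $3|S|$ when $|S| = o(n)$, both well above $\rho(u)|S|$) then shows that a.a.s.\ \emph{every} nonempty $S$ with $|S| \le un$ has $|\partial S| \ge \rho(u)\,|S|$, as long as $\rho$ is nonincreasing on $(0,\tfrac12]$; in particular $h(G) \ge \rho(\tfrac12)$ a.a.s.

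Next I would bound $B_u(G)$ from below. Let $D$ be an effective divisor with $W := \supp(D)$ of size $k$ such that every connected component $C_1,\dots,C_m$ of $V(G)\smallsetminus W$ has at most $un$ vertices. Each $C_i$ is an entire component of $G-W$, so every edge out of $C_i$ ends in $W$, and these edge boundaries are pairwise disjoint; since $G$ is $3$-regular, $\sum_i |\partial C_i| = e(W,\,V(G)\smallsetminus W) \le 3k$. On the other hand $|C_i| \le un$, so the expansion profile gives $|\partial C_i| \ge \rho(u)\,|C_i|$, whence
\[
\rho(u)(n-k)\;=\;\rho(u)\sum_i |C_i|\;\le\;\sum_i |\partial C_i|\;\le\;3k ,
\]
so $k \ge \tfrac{\rho(u)}{3+\rho(u)}\,n$. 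Since $\deg D \ge |\supp D| = k$, this yields $B_u(G) \ge \tfrac{\rho(u)}{3+\rho(u)}\,n$ a.a.s., for every $u \in (0,\tfrac12]$.

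Feeding these two bounds into Theorem~\ref{Thm:LowerBound} gives, a.a.s.\ and for every $u \in (0,\tfrac12]$,
\[
\gon(G)\;\ge\;\min\!\left\{\,\frac{\rho(u)}{3+\rho(u)}\,,\ \rho\!\left(\tfrac12\right)u\,\right\}n .
\]
The first term decreases in $u$ (since $\rho$ does) and the second increases, so the strongest bound is attained at their crossover; plugging in the explicit values of $\rho(\cdot)$ and optimizing numerically over $u$ places this crossover value above $0.072$. Since the genus of $G$ is $|E(G)|-|V(G)|+1 = n/2+1$, this is also more than one seventh of the genus in the limit, matching the abstract.

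The step I expect to dominate the work is the first-moment computation: producing $\rho(\alpha)$ with constants sharp enough that the optimization clears $0.072$, and in particular verifying that $\alpha\mapsto\rho(\alpha)$ is nonincreasing on $(0,\tfrac12]$ --- without monotonicity one cannot reduce the component-size condition in $B_u$ to the single ratio $\rho(u)$, and the clean bound $\tfrac{\rho(u)}{3+\rho(u)}n$ breaks down. Everything else is comparatively routine: the edge-boundary count uses only $3$-regularity, Theorem~\ref{Thm:LowerBound} is invoked as a black box, and the usual care with sublinear-size sets, with non-integer $\alpha n$, and with additive constants in $g$ does not affect the limiting inequality.
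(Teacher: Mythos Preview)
Your approach is essentially the paper's: your inline edge-count argument bounding $B_u(G)\ge \tfrac{\rho(u)}{3+\rho(u)}\,n$ is exactly Lemma~\ref{Lem:CheegerRegular} (so that, combined with Theorem~\ref{Thm:LowerBound}, you are reproducing Theorem~\ref{Thm:CheegerBound}), and your optimization over $u$ matches the paper's. The only difference is that the paper does not redo the first-moment computation but instead cites Kostochka--Melnikov for $h(G)\ge 1/4.95$ and Kolesnik--Wormald for the $u$-Cheeger profile (yielding $h_u(G)\ge 0.24$ at $u\approx 0.36$), which immediately gives $\min\{0.24/3.24,\,0.36/4.95\}\approx 0.072$; you should be aware that hitting the constant $0.072$ requires expansion bounds at least this sharp, so if you carry out the first-moment calculation yourself you must match these cited constants.
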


Note that the genus of a 3-regular graph on $n$ vertices is $\frac{1}{2}n+1$.  Theorem \ref{Thm:RandomThreeRegular} therefore implies that the expected gonality of a random 3-regular graph is at least one seventh its genus.  This result is obtained by bounding the invariants $h(G)$ and $B_u (G)$ that appear in Theorem \ref{Thm:LowerBound} for random 3-regular graphs.  This yields a lower bound on the gonality of such graphs that depends on $u$, which is optimized for some $u$ between $0.35$ and $0.40$.

Our primary motivation for studying the gonality of graphs comes from the theory of algebraic curves.  In \cite{Baker08}, Baker develops a theory of specialization of divisors from curves to graphs.  This theory allows one to establish results in algebraic geometry using combinatorial techniques, and conversely, to discover new combinatorics using algebraic geometry.  Because the top-dimensional strata of $M_g^{\trop}$ correspond to 3-regular graphs, such graphs are considered to be the closest analogues of algebraic curves.  It is therefore natural to compare invariants of random 3-regular graphs to the analogous invariants of general curves.

By \cite{Kempf71, KleimanLaksov72}, the gonality of any curve of genus $g$ is at most $\lfloor \frac{g+3}{2} \rfloor$.  Moreover, the Brill-Noether theorem implies that equality holds for the general curve \cite{GriffithsHarris80}.  In other words, the set of genus $g$ curves of gonality exactly $\lfloor \frac{g+3}{2} \rfloor$ is a dense open subset of the moduli of space of curves $M_g$.  One could hope to address the analogous questions for 3-regular graphs.  First, what is the maximum gonality of a 3-regular graph?  Second, what is the expected gonality of a random 3-regular graph?  Theorem \ref{Thm:RandomThreeRegular} represents progress toward the second of these questions.

Theorem \ref{Thm:RandomThreeRegular} is an improvement on several earlier results.  In \cite{treewidth}, de Bruyn and Gijswijt show that the gonality of a graph is bounded below by its treewidth.  In \cite{KM93}, the authors show that a random 3-regular graph on $n$ vertices almost surely has bisection width at least $\frac{10}{99}n$.  As noted in \cite{NM08}, this implies that such a graph almost surely has treewidth at least $\frac{5}{99}n$.  In \cite{AminiKool}, Amini and Kool use the results of \cite{treewidth} to produce a lower bound on graph gonality in terms of the algebraic connectivity.  They then use this to show that the gonality of a random $k$-regular graph is bounded above and below by linear functions in $n$.  While \cite{AminiKool} do not attempt to bound the implied constants, their approach cannot yield a bound higher than $\frac{5}{99}n$, due to their reliance on \cite{treewidth}.  In a related direction, \cite{AminiKool} shows that the expected gonality of an Erdos-Renyi random graph on $n$ vertices is also bounded above and below by functions that are linear in $n$.  An improvement of this result appears in \cite{randomgonality}, where it is shown that the expected gonality of such graphs is in fact asymptotic to $n$.

The argument of the present paper is in some ways similar to the proof in \cite{treewidth} that gonality is bounded below by treewidth.  The treewidth of a graph can be defined in terms of certain collections of connected subsets of the vertices, known as brambles.  One example of a bramble is the set of all connected subsets of $V(G)$ of size at least $\frac{1}{2} \vert V(G) \vert$, which implies that the treewidth of a graph $G$, and hence its gonality, is at least $B_{\frac{1}{2}} (G)$.  To improve on this, we note that \cite{treewidth} in fact uses the treewidth in two separate ways.  First, the treewidth is a lower bound on the degree of an effective divisor that intersects every element of a bramble.  Second, it provides a lower bound on the size of a cut that separates two elements of this bramble.  If we consider instead the set of all connected subsets of size at least $u \vert V(G) \vert$ for some $u \leq \frac{1}{2}$ (which is not a bramble), then the minimum size of a cut that separates these subsets decreases, while the minimum size of an effective divisor that intersects every subset increases.  By varying $u$, it is possible to obtain a higher bound.

The invariant $B_u (G)$ is somewhat mysterious, and much of the paper is devoted to finding bounds for it in terms of more familiar graph invariants, including the $u$-Cheeger constant $h_u (G)$ and the algebraic connectivity $\lambda_2$.  We again refer the reader to Secion \ref{Sec:Prelim} for definitions of these graph invariants.  Specifically, we establish the following results.

\begin{theorem}
\label{Thm:CheegerBound}
Let $G$ be a $k$-regular graph. Then, for any $u$, we have
\[
\gon(G) \geq \min \left\{ \frac{h_{u}(G)}{k+h_{u}(G)} \vert V(G) \vert , h(G)u \vert V(G) \vert \right\} .
\]
\end{theorem}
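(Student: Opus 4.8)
The plan is to deduce this from Theorem~\ref{Thm:LowerBound}, whose second term $h(G)u\vert V(G)\vert$ is already one of the two quantities in the desired minimum. So it suffices to prove the lower bound
\[
B_u(G) \;\geq\; \frac{h_u(G)}{k+h_u(G)}\,\vert V(G)\vert .
\]
To this end I would fix an effective divisor $D$ realizing $B_u(G)$ --- that is, $\deg(D) = B_u(G)$ and every connected component of $V(G)\smallsetminus\supp(D)$ has size at most $u\vert V(G)\vert$ --- and set $S = \supp(D)$ and $T = V(G)\smallsetminus S$. Since $D$ is effective, $\deg(D)\geq \vert S\vert$, so it is enough to bound $\vert S\vert$ from below.

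The heart of the argument is a double counting of the edges between $S$ and $T$. Write the induced subgraph on $T$ as a disjoint union of connected components $T = C_1 \sqcup \cdots \sqcup C_m$. Each $C_i$ satisfies $\vert C_i\vert \leq u\vert V(G)\vert$, so the definition of the $u$-Cheeger constant gives $\vert \partial C_i\vert \geq h_u(G)\vert C_i\vert$; moreover, because $C_i$ is a \emph{full} connected component of $T$, every edge counted by $\partial C_i$ has its other endpoint in $S$. Summing over $i$,
\[
\#\{\text{edges between } S \text{ and } T\} \;=\; \sum_{i=1}^{m} \vert \partial C_i\vert \;\geq\; h_u(G)\,\vert T\vert \;=\; h_u(G)\bigl(\vert V(G)\vert - \vert S\vert\bigr).
\]
On the other hand, $k$-regularity bounds the same quantity above by $k\vert S\vert$, since the $k\vert S\vert$ edge-endpoints at vertices of $S$ account for each $S$--$T$ edge once (and each $S$--$S$ edge twice). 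Combining the two estimates gives $h_u(G)\bigl(\vert V(G)\vert - \vert S\vert\bigr) \leq k\vert S\vert$, hence $\vert S\vert \geq \frac{h_u(G)}{k+h_u(G)}\vert V(G)\vert$, and therefore $\deg(D) \geq \frac{h_u(G)}{k+h_u(G)}\vert V(G)\vert$, as claimed.

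I expect the argument to be short, and there is no serious obstacle; the points that need a little care are the degenerate cases (if $T = \emptyset$ the inequality is immediate because $\frac{h_u(G)}{k+h_u(G)} < 1$, and empty components are simply dropped from the sum) and applying the $u$-Cheeger constant in the right direction, namely as the assertion that $\vert\partial C\vert \geq h_u(G)\vert C\vert$ for every (connected) vertex set $C$ with $\vert C\vert \leq u\vert V(G)\vert$. The one genuinely substantive observation is that the boundary of each component $C_i$ lies entirely inside $S$ --- this is exactly what allows the local Cheeger inequalities for the $C_i$ to assemble into a global lower bound on the number of $S$--$T$ edges, which is then played off against the regularity bound $k\vert S\vert$.
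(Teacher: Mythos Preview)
Your proposal is correct and follows essentially the same route as the paper: both reduce to showing $B_u(G) \geq \frac{h_u(G)}{k+h_u(G)}\vert V(G)\vert$ (the paper isolates this as Lemma~\ref{Lem:CheegerRegular}) and then invoke Theorem~\ref{Thm:LowerBound}. The only cosmetic difference is the bookkeeping in the edge count --- you bound the number of $S$--$T$ edges above by $k\vert S\vert$, while the paper bounds the number of edges touching $T$ above by the total number of edges $\tfrac{k}{2}\vert V(G)\vert$ --- but these rearrange to the same inequality.
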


\begin{theorem}
\label{Thm:SpectralBound}
Let $G$ be a graph, and let $d$ be the maximum valence of a vertex in $G$. Then
\[
\gon(G) \geq \frac{\vert V(G) \vert}{2\lambda_2} \left[ -(7\lambda_2 + 9d) + 3\sqrt{9\lambda_2^2 + 14d\lambda_2 + 9d^2} \right] .
\]
\end{theorem}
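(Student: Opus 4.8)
The plan is to derive the stated inequality from Theorem~\ref{Thm:LowerBound} (or, equivalently, from the maximum-valence-$d$ analogue of Theorem~\ref{Thm:CheegerBound}, obtained by carrying the regularity degree $k$ over to $d$ in its proof) by bounding both quantities $B_u(G)$ and $h(G)$ from below in terms of $\lambda_2$ and $d$, and then optimizing the resulting family of lower bounds over the parameter $u$.

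All of the spectral input comes from the inequality
\[
e_G\bigl(S,\,V(G)\smallsetminus S\bigr)\ \geq\ \frac{\lambda_2}{\lvert V(G)\rvert}\,\lvert S\rvert\,\bigl\lvert V(G)\smallsetminus S\bigr\rvert ,\qquad S\subseteq V(G),
\]
which is the easy direction of the Cheeger inequality and follows by substituting the vector $\mathbf 1_S-\tfrac{\lvert S\rvert}{\lvert V(G)\rvert}\mathbf 1$ into the variational characterization of $\lambda_2$. Restricting to $\lvert S\rvert\leq\tfrac12\lvert V(G)\rvert$ gives $h(G)\geq\lambda_2/2$, and restricting to $\lvert S\rvert\leq u\lvert V(G)\rvert$ gives $h_u(G)\geq\lambda_2(1-u)$. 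To bound $B_u(G)$, I would follow the treewidth-style argument: let $D$ be an effective divisor of degree $B_u(G)$ for which every connected component of $V(G)\smallsetminus\supp(D)$ has at most $u\lvert V(G)\rvert$ vertices, and set $T=\supp(D)$, so $\lvert T\rvert\leq B_u(G)$. Split the components of $G-T$ greedily into two parts $A$ and $B=V(G)\smallsetminus(T\cup A)$; since each component is small, $\lvert A\rvert$ and $\lvert B\rvert$ can be kept balanced to within $u\lvert V(G)\rvert$, and since no edge joins $A$ to $B$, every edge out of $A$ lands in $T$, so $e_G(A,V(G)\smallsetminus A)\leq\sum_{v\in T}\deg(v)\leq d\lvert T\rvert$. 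Comparing this with the displayed estimate applied to $A$, and feeding in the bounds on $\lvert A\rvert$ and $\lvert V(G)\smallsetminus A\rvert$ in terms of $\lvert V(G)\rvert$, $u$ and $\lvert T\rvert$, this becomes a quadratic inequality for $\lvert T\rvert$; solving it yields $B_u(G)\geq\phi(u)\lvert V(G)\rvert$ for an explicit $\phi$, decreasing in $u$, built from $\lambda_2$ and $d$.

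Inserting $B_u(G)\geq\phi(u)\lvert V(G)\rvert$ and $h(G)\geq\lambda_2/2$ into Theorem~\ref{Thm:LowerBound} gives, for every $u\in(0,\tfrac12]$, the bound $\gon(G)\geq\min\{\phi(u),\tfrac{\lambda_2}{2}u\}\cdot\lvert V(G)\rvert$, in which the first term decreases and the second increases in $u$. The best choice is the value of $u$ at which the two agree (or the endpoint $u=\tfrac12$ if that crossing lies outside the interval); setting $\phi(u)=\tfrac{\lambda_2}{2}u$, clearing the square root hidden in $\phi$, and solving the resulting quadratic in $u$ produces a root which, substituted back, should be exactly the claimed expression. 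I expect the real work to be twofold: sharpening the estimate for $B_u(G)$ enough to recover the precise constants $7$, $9$, $14$ — the grouping of $G-T$ must keep $\lvert A\rvert$ simultaneously near-balanced and compatible with the constraint $\lvert A\rvert\leq u\lvert V(G)\rvert$ that re-enters the spectral estimate, and the crude count $e_G(A,V(G)\smallsetminus A)\leq d\lvert T\rvert$ cannot be allowed to lose much — and then carrying the one-variable optimization through cleanly enough to recognize the closed form.
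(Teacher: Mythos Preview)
Your plan will produce \emph{a} spectral lower bound on $\gon(G)$, but not the one stated in the theorem. The spectral input you use --- the easy Cheeger direction $e_G(S,V(G)\smallsetminus S)\geq \tfrac{\lambda_2}{\lvert V(G)\rvert}\,\lvert S\rvert\,\lvert V(G)\smallsetminus S\rvert$ --- is strictly weaker than what the paper actually invokes, namely the vertex-separator inequality of Lemma~\ref{Lemma:Eig}. Writing $a=\lvert A\rvert$, $b=\lvert B\rvert$, $c=\lvert C\rvert$, $n=\lvert V(G)\rvert$ with $C$ separating $A$ and $B$, Lemma~\ref{Lemma:Eig} rearranges to
\[
cdn \ \geq\ \lambda_2\bigl(a(n-a)+b(n-b)+2ab\bigr),
\]
whereas even the best version of your edge count (bounding edges out of both $A$ and $B$ by $d\lvert T\rvert$) yields only
\[
cdn \ \geq\ \lambda_2\bigl(a(n-a)+b(n-b)\bigr).
\]
The missing $2\lambda_2 ab$ term is exactly what produces the coefficients $7$, $9$, $14$ after the $\tfrac13$--$\tfrac23$ split; with your inequality the resulting quadratic has smaller coefficients and the bound you extract is genuinely weaker. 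For instance, at $d=3$, $\lambda_2=1$ the theorem gives about $0.23\,n$, while your route --- even after optimizing in $u$ --- tops out near $0.15\,n$. So the expectation that the algebra ``should be exactly the claimed expression'' is where the argument breaks: it is not a matter of bookkeeping but of using a sharper spectral lemma.

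The overall architecture also differs. The paper does not bound $B_u(G)$ and $h(G)$ separately and then optimize in $u$. It argues that a divisor $D$ of positive rank has a representative for which every component of $V(G)\smallsetminus\supp(D)$ has fewer than $\tfrac12\lvert V(G)\rvert$ vertices, then uses Proposition~\ref{Prop:Separator} to split those components into parts $A,B$ each of size at least $\tfrac13\lvert V(G)\smallsetminus\supp(D)\rvert$, applies Lemma~\ref{Lemma:Eig} with $C=\supp(D)$, and solves a single quadratic in $\lvert\supp(D)\rvert$. No free parameter enters, and the constants fall out directly from the $\tfrac13$--$\tfrac23$ balance together with Lemma~\ref{Lemma:Eig}.
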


In the case of primary interest to us, namely that of random 3-regular graphs, the bound provided by Theorem \ref{Thm:CheegerBound} is stronger than that of Theorem \ref{Thm:SpectralBound}.  Nevertheless, Theorem \ref{Thm:SpectralBound} has at least two advantages.  First, while Theorem \ref{Thm:CheegerBound} applies only to regular graphs, Theorem \ref{Thm:SpectralBound} applies to arbitrary graphs.  Second, while computation of the $u$-Cheeger constant is NP-hard, the algebraic connectivity can be computed, to any degree of accuracy, in polynomial time.  For this reason we expect that Theorem \ref{Thm:SpectralBound} may be more useful for applications, as it is more efficient.

While this paper is primarily concerned with lower bounds on graph gonality, we make a few brief remarks on upper bounds.  It follows from Baker's specialization lemma that, if $G$ is a graph of genus $g$, then there exists a positive integer $e$ such that the refinement obtained by subdividing each edge of $G$ into $e$ edges has gonality at most $\lfloor \frac{g+3}{2} \rfloor$.  We note that this argument uses the Kleiman-Laksov result on algebraic curves and to date there is no known purely combinatorial proof.  The question of whether this statement holds without refinement -- that is, whether the integer $e$ can always be taken to be 1 -- remains open.  Without passing to refinements, much less is known.  As far as we are aware, the best known upper bound for the gonality of a graph is its genus.  Specifically, if $G$ is a graph of genus $g$ and $E$ is any effective divisor of degree $g-2$, then by Riemann-Roch the divisor $K_G-D$ has rank at least 1.  It follows that $\gon (G) \leq g$.  To obtain another upper bound, note that the complement of an independent set has rank at least 1, and thus $\gon (G) \leq \vert V(G) \vert - \alpha (G)$, where $\alpha (G)$ is the size of the largest independent set in $G$.  For a 3-regular graph $G$, however, this bound is smaller than $g$ if and only if $G$ is bipartite.  In this case $\vert V(G) \vert - \alpha (G) = g-1$, so for bipartite 3-regular graphs this upper bound differs by only 1 from the bound obtained by Riemann-Roch.

\subsection*{Acknowledgements}  The first author was supported by a gift from the Eaves family for summer undergraduate research.  We thank the Eaves for their generosity.  The second author was supported in part by NSF DMS-1601896.

\section{Preliminaries}
\label{Sec:Prelim}

\subsection{Divisor Theory of Graphs}
In this section, we outline the basic definitions and properties of divisors on graphs.  For more details, we refer the reader to \cite{Divisors, Baker08}.

\begin{definition}
A \emph{divisor} $D$ on a graph $G$ is a formal $\ZZ$-linear combination of vertices of $G$.
\end{definition}

Divisors are alternatively known as ``chip-configurations'', because we may view a divisor as consisting of stacks of chips on the vertices of $G$.  We write $D(v)$ for the coefficient of the vertex $v$ in the divisor $D$, or alternatively the number of chips of $D$ at $v$.  We define an equivalence relation on the set of divisors on $G$ using chip-firing moves, which we define here.

\begin{definition}
Let $D$ be a divisor on a graph $G$ and fix $v\in V(G)$.  The divisor $D'$ obtained from $D$ by \emph{firing} the vertex $v$ is defined by
\[
D'(w)=\left.
\begin{cases}
D(v) - \val(v) & \text{if } w = v \\
D(w) + \varepsilon(w,v) & \text{if } w \neq v
\end{cases}
\right.
\]
where $\varepsilon(w,v)$ is the number of edges between $w$ and $v$.
\end{definition}

If the vertices of a graph $G$ are labeled $v_1 , \ldots , v_n$, we define the \emph{Laplacian} of $G$, denoted $L(G)$, to be the $n \times n$ matrix with entries
\[
L(G)_{ij} = \left.
\begin{cases}
-\val (v_{i}) & \text{if } i = j \\
\varepsilon(v_{i},v_{j}) & \text{if } i\neq j.
\end{cases}
\right.
\]

Two divisors $D$ and $D'$ are said to be \emph{linearly equivalent} if $D'$ can be obtained from $D$ by a sequence of chip-firing moves.  Equivalently, $D$ and $D'$ are equivalent if their difference is contained in the image of the graph Laplacian $L(G)\ZZ^{V(G)}$.  This defines an equivalence relation on the set of divisors on $G$.

We now recall some definitions related to positivity of divisors on graphs.

\begin{definition}
A divisor $D$ is called \emph{effective} if $D(v) \geq 0$ for all $v\in V(G)$.
\end{definition}

\begin{definition}
The \emph{complete linear series} of a divisor $D$ is
\[
\vert D \vert = \{D'\sim D \vert D' \text{ is effective}\}.
\]
The \emph{support} of an effective divisor $D$ is
\[
\supp(D) = \{v\in V(G) \vert D(v) > 0\}.
\]
\end{definition}

Perhaps the most important property of a divisor on a graph is its Baker-Norine rank.

\begin{definition}
A divisor $D$ is said to have \emph{rank at least} $r$ if, for every effective divisor $E$ of degree $r$, $D-E$ is equivalent to an effective divisor. The \emph{rank} of $D$ is the largest integer $r$ such that $D$ has rank at least $r$.
\end{definition}

Our main interest in this paper is a graph invariant known as the gonality.

\begin{definition}
The \emph{gonality} of a graph $G$ is the smallest degree of a divisor on $G$ with positive rank.
\end{definition}

While it is NP-hard to compute the gonality of a graph \cite{NPhard}, the complexity arises from the sheer number of divisors that one would be required to check.  If we are given a divisor, there is a relatively simple algorithm that determines whether it has positive rank.  To follow the algorithm, one first needs the notion of a reduced divisor.

\begin{definition}
Let $v\in V(G)$. A divisor $D$ is said to be $v$-\emph{reduced} if:
\begin{enumerate}
\item  the divisor $D$ is ``effective away from $v$'' -- that is, for all $w\in V(G) \smallsetminus \{ v \}$, $D(w)\geq 0$, and
\item  for any $U\subset V(G)\smallsetminus \{ v \}$, the divisor obtained by firing all vertices in $U$ is not effective away from $v$.
\end{enumerate}
\end{definition}

Each divisor has a unique $v$-reduced representative.  Moreover, if a divisor $D$ is equivalent to an effective divisor, then its $v$-reduced representative is effective.  With these facts in hand, it becomes relatively easy to check that a divisor $D$ has positive rank.  The divisor $D$ has positive rank if and only if the $v$-reduced divisor equivalent to $D-v$ is effective for all vertices $v$.  Of course, it is crucial in this process that we be able to compute $v$-reduced divisors.  To find $v$-reduced divisors quickly, we have the following algorithm, known as \emph{Dhar's Burning Algorithm}.

\begin{enumerate}
\item  Start a fire at the vertex $v$, placing it in the set of burnt vertices.
\item  For each burnt vertex, place each edge adjacent to it in the set of burnt edges.
\item  If the number of burnt edges adjacent to a vertex $w$ exceeds the number of chips on $w$, place $w$ in the set of burnt vertices.  Then repeat step 2. Otherwise, if no such vertex exists, proceed to step 4.
\item  If the entire set of vertices burn, then $D$ is $v$-reduced.  Otherwise, the divisor obtained by firing all unburnt vertices is effective away from $v$.  Replace $D$ with this equivalent divisor and repeat the process.
\end{enumerate}

\subsection{Expander Graphs}
In this paper we relate the gonality to the expansion properties of a graph.  One of the most well-known measures of graph expansion is known variously as the Cheeger constant, isoperimetric number, or edge expansion constant.  Throughout, if $G$ is a graph and $U\subseteq V(G)$, we define $\partial U$ to be the set of edges with exactly one endpoint in $U$.

\begin{definition} \cite{Cheeger}
The \emph{Cheeger constant} of a graph $G$ is defined to be
\[
h(G):= \min_{0< \vert U \vert \leq \frac{1}{2} \vert V(G) \vert}\frac{\vert \partial U \vert}{\vert U \vert } .
\]
More generally, for any $u \in (0,\frac{1}{2}]$, The $u$-\emph{Cheeger constant} of $G$ is defined to be
\[
h_u(G) := \min_{0< \vert U \vert \leq u \vert V(G) \vert }\frac{ \vert \partial U \vert}{\vert U \vert} .
\]
For values of $u$ greater than $\frac{1}{2}$, we vacuously define $h_u (G)$ to be infinity.
\end{definition}

A second measure of graph expansion is the algebraic connectivity, which we define here.

\begin{definition}
The \emph{algebraic connectivity} $\lambda_{2}$ of a graph $G$ is defined to be the second smallest\footnote{We note with mild exasperation that there is no apparent consistency in the combinatorics literature concerning the indices of the eigenvalues of the graph Laplacian.  They are equally likely to be ordered smallest to largest as largest to smallest, or indexed starting with zero rather than one.  Such issues nevertheless pale in comparison to the wealth of literature that refers simply to the ``second eigenvalue'' without indicating whether this means second smallest or second largest.} eigenvalue of the graph Laplacian $L(G)$.
\end{definition}

For $k$-regular graphs, the algebraic connectivity is related to the Cheeger constant by the well-known Cheeger inequalities:
\[
\frac{1}{2} \lambda_2 \leq h(G) \leq \sqrt{2k\lambda_2} .
\]
One advantage of the algebraic connectivity is that the Cheeger constant is NP-hard to compute, whereas the algebraic connectivity of a graph with $n$ vertices can be computed to any degree of accuracy in $O(n^3)$ time.

The algebraic connectivity can be used to bound the size of vertex separators, which we define below.

\begin{definition} \cite{Eig}
Let $G$ be a graph. A set $C\subset V(G)$ is said to $\emph{separate}$ vertex sets $A,B \subset V(G)$ if
\begin{enumerate}
\item $A$, $B$ and $C$ partition $V(G)$ and
\item no vertex of $A$ is adjacent to a vertex of $B$.
\end{enumerate}
\end{definition}

\begin{lemma} \cite[Theorem~2.8]{Eig}
\label{Lemma:Eig}
Let $G$ be a graph and denote by $d$ the maximal valence of a vertex of $G$.  If $C$ separates vertex sets $A$ and $B$, then
\[
\vert C \vert \geq \frac{4\lambda_2 \vert A \vert \vert B \vert}{d \vert V(G) \vert - \lambda_2 \vert A \cup B \vert}.
\]
\end{lemma}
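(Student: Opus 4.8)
The plan is to bound $\lambda_2$ from above by the Rayleigh quotient of a carefully chosen test vector, and then solve the resulting inequality for $\vert C \vert$. Write $n = \vert V(G) \vert$, $a = \vert A \vert$, $b = \vert B \vert$ and $c = \vert C \vert$. We may assume $G$ is connected, since otherwise $\lambda_2 = 0$ and the asserted inequality is trivial, and we may assume $A$, $B$ are nonempty with $c \geq 1$, the remaining cases being immediate. Since $L(G)$ as defined above is negative semidefinite, we work with $-L(G)$: it is positive semidefinite, its second smallest eigenvalue is $\lambda_2$, and $x^{\top}(-L(G))x = \sum_{(u,v) \in E(G)} (x_u - x_v)^2$, so the Courant--Fischer characterization gives $\lambda_2 = \min\bigl\{ \sum_{(u,v) \in E(G)} (x_u - x_v)^2 \big/ \sum_{v} x_v^2 \;:\; x \perp \mathbf{1},\ x \neq 0 \bigr\}$.

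First I would introduce the test vector $x \in \RR^{V(G)}$ with $x_v = 1$ for $v \in A$, $x_v = -1$ for $v \in B$, and $x_v = 0$ for $v \in C$. Its mean is $\bar{x} = (a-b)/n$; setting $y = x - \bar{x}\,\mathbf{1}$ gives a nonzero vector orthogonal to $\mathbf{1}$ with $y_u - y_v = x_u - x_v$ for all $u,v$, so applying the characterization to $y$ yields
\[
\lambda_2 \;\leq\; \frac{\sum_{(u,v) \in E(G)} (x_u - x_v)^2}{\sum_{v \in V(G)} y_v^2}.
\]

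Next I would evaluate both sides. An edge contributes a nonzero term to the numerator only when its endpoints receive different values of $x$; edges internal to $A$, to $B$, or to $C$ contribute nothing, and --- this is where the separator hypothesis is used --- there are no edges between $A$ and $B$. Hence the numerator equals the number of edges joining $C$ to $A \cup B$, and since each such edge meets $C$ while every vertex of $C$ has valence at most $d$, this number is at most $dc$. For the denominator, $\sum_v y_v^2 = \sum_v x_v^2 - n\bar{x}^2 = (a+b) - (a-b)^2/n$, which a one-line manipulation using $n = a+b+c$ rewrites as $(4ab + c(a+b))/n$. Substituting gives $\lambda_2\bigl(4ab + c(a+b)\bigr) \leq dnc$.

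Finally I would rearrange this as $4\lambda_2 ab \leq c\bigl(dn - \lambda_2(a+b)\bigr)$. When $\lambda_2 > 0$ the left-hand side is positive, so $dn - \lambda_2(a+b)$ must be positive as well, and dividing through gives $c \geq 4\lambda_2 ab \big/ \bigl(dn - \lambda_2(a+b)\bigr)$; since $a+b = \vert A \cup B \vert$ this is precisely the claimed bound, and when $\lambda_2 = 0$ the bound is trivial. I do not expect a genuine obstacle here: this is a standard test-function estimate for $\lambda_2$, and the only points needing care are choosing the values $\pm 1, 0$ so that the constant $4$ emerges with the correct numerator and denominator, reconciling the sign convention for $L(G)$ with the standard positive semidefinite Laplacian, and checking that the final division is legitimate.
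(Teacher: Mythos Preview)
The paper does not supply its own proof of this lemma: it is quoted verbatim from an external reference and used as a black box in the proof of Theorem~\ref{Thm:SpectralBound}. So there is no in-paper argument to compare against.

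Your proof is correct and is the standard one for this inequality. The test vector with values $1,-1,0$ on $A,B,C$ is exactly the right choice; the numerator bound $\sum_{(u,v)\in E(G)} (x_u-x_v)^2 \leq dc$ uses the separator hypothesis and the degree cap cleanly; the denominator identity $\sum_v y_v^2 = (4ab + c(a+b))/n$ is correct (indeed $(a+b)n - (a-b)^2 = (a+b)^2 + c(a+b) - (a-b)^2 = 4ab + c(a+b)$); and the final rearrangement, including the observation that $dn - \lambda_2(a+b) > 0$ before dividing, is handled properly. You also correctly flag and resolve the sign convention: the paper's $L(G)$ has $-\val(v_i)$ on the diagonal, so it is the negative of the usual combinatorial Laplacian, and one must pass to $-L(G)$ to apply the Courant--Fischer characterization with $\lambda_2 \geq 0$ as used elsewhere in the paper. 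There is no gap.
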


An important class of expander graphs is that of random regular graphs.  We describe a model for such graphs, which can be found in \cite{Bollobas}.  We fix $n$ sets $W_j$, each consisting of $k$ elements, with $kn$ even.  A \emph{configuration} is a partition of $W = \cup_{j=1}^n W_j$ into 2-element sets.  We let $\Phi_{k,n}$ be the set of all configurations, and turn $\Phi_{k,n}$ into a probability space by taking every configuration to have equal probability of being selected.  Each configuration in $\Phi_{k,n}$ has an associated $k$-regular, $n$-vertex graph, with an edge between two vertices $v_i$ and $v_j$ if and only if there is a set in the partition consisting of an element of $W_i$ and an element of $W_j$.

We say that a property of a random $k$-regular graph holds \emph{asymptotically almost surely} if the probability that the property holds approaches one as $n$ approaches infinity.

\section{Gonality and Edge Expansion}

Our main observation is the following.  If, for every representative of a divisor $D$ on a graph $G$, there exists a ``large'' connected component of $V(G) \smallsetminus \supp (D)$, then $D$ cannot have positive rank.  This is made precise by Theorem \ref{Thm:LowerBound}.


\begin{proof}[Proof of Theorem~\ref{Thm:LowerBound}]
Let $D$ be a divisor of positive rank and assume that $\deg(D) < B_{u}(G)$.  For any representative $D' \in \vert D \vert$, we let $U_{D'}$ denote the union of all connected components of $V(G) \smallsetminus \supp (D')$ of size greater than $u \vert V(G) \vert$.  Note that, by the definition of $B_u (G)$, for any representative $D'$, the set $U_{D'}$ is nonempty.

Choose $D_0 \in \vert D \vert$ such that $U_{D_0}$ is minimal.  That is, for any $D' \in \vert D \vert$, $U_{D'}$ is not strictly contained in $U_{D_0}$.  Now, pick a vertex $v \in U_{D_0}$.  By \cite[Lemma~1.3]{treewidth}, there exist firing sets $A_1 \subseteq A_2 \subseteq \cdots \subseteq A_n \subseteq V(G) \{ v \}$ such that the divisor obtained from $D_{i-1}$ by firing $A_i$ is an effective divisor $D_i$, and $D_n$ is $v$-reduced.  Since by assumption $D$ has positive rank, $D_n(v) > 0$.

Since firing these sets $A_i$ eventually puts at least one chip in $U_{D_0}$, there must exist an index $i$ such that $U_{D_i} \neq U_{D_0}$.  Let $i$ be the smallest such index.  Since $U_{D_0}$ was chosen to be minimal, $U_{D_i} \not\subset U_{D_0} = U_{D_{i-1}}$.  It follows that there exists some connected component $U \subset U_{D_i}$ that intersects the support of $D_{i-1}$.  Since $U$ does not intersect the support of $D_i$, we conclude that $U \subset A_i$.  It follows that $\vert A_i \vert \geq \vert U \vert \geq u \vert V(G) \vert$.

Let $U' \subset U_{D_0}$ be the connected component that contains $v$, and note that by assumption we have $U' \subset V(G) \smallsetminus A_i$. Since $\vert A_i \vert \leq \vert V(G) \smallsetminus U' \vert$, we have $\vert V(G) \smallsetminus A_i \vert \geq \vert U' \vert \geq u \vert V(G) \vert$.  Taking the smaller of $A_i$ and $V(G) \smallsetminus A_i$, we therefore see that $\vert \partial A_i \vert \geq h(G) u \vert V(G) \vert$.  Finally, since $D_i$ is effective, we must have $D_{i-1} (w) \geq \outdeg_w A_i$ for every vertex $w \in A_i$.  Thus, $\deg(D) \geq \vert \partial A_i \vert$, and so $\deg (D) \geq h(G) u \vert V(G) \vert$.
\end{proof}

The following lemma provides a bound on the invariant $B_u (G)$ introduced in Theorem \ref{Thm:CheegerBound} for regular graphs.

\begin{lemma}
\label{Lem:CheegerRegular}
Let $G$ be a $k$-regular graph, and let $D$ be an effective divisor such that
\[
\deg(D) < \frac{h_{u}(G)}{k+h_{u}(G)}\vert V(G) \vert .
\]
Then there exists a connected component of $V(G)\smallsetminus \supp(D)$ with size greater than $u \vert V(G) \vert$.
\end{lemma}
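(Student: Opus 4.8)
The plan is to argue by contradiction. Suppose every connected component of $V(G) \smallsetminus \supp(D)$ has size at most $u\vert V(G)\vert$; I will show this forces $\deg(D) \ge \frac{h_u(G)}{k+h_u(G)}\vert V(G)\vert$, contradicting the hypothesis. Write $S = \supp(D)$ and let $C_1, \dots, C_m$ be the connected components of $V(G) \smallsetminus S$. Since $D$ is effective, every vertex of $S$ carries at least one chip, so $\vert S\vert \le \deg(D)$; it therefore suffices to bound $\vert S\vert$ from below.

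The first step is an edge count. Because each $C_j$ is a connected component of the complement of $S$, no edge joins $C_j$ to another $C_{j'}$, so every edge in $\partial C_j$ has one endpoint in $S$; consequently $\sum_{j=1}^m \vert \partial C_j\vert$ equals the number of edges between $S$ and $V(G)\smallsetminus S$. Since $G$ is $k$-regular, the sum of the valences of the vertices of $S$ is $k\vert S\vert$, and this quantity counts every edge from $S$ to its complement exactly once (and every edge inside $S$ twice), so $\sum_{j=1}^m \vert \partial C_j\vert \le k\vert S\vert$.

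The second step invokes the $u$-Cheeger constant. Each $C_j$ is nonempty and, by the contradiction hypothesis, has size at most $u\vert V(G)\vert$, so $C_j$ is a legitimate competitor in the minimum defining $h_u(G)$ and hence $\vert \partial C_j\vert \ge h_u(G)\vert C_j\vert$. Summing over $j$ and using $\sum_{j=1}^m \vert C_j\vert = \vert V(G)\vert - \vert S\vert$ gives
\[
h_u(G)\bigl(\vert V(G)\vert - \vert S\vert\bigr) \;\le\; \sum_{j=1}^m \vert \partial C_j\vert \;\le\; k\vert S\vert ,
\]
and rearranging yields $\vert S\vert \ge \frac{h_u(G)}{k+h_u(G)}\vert V(G)\vert$, the desired contradiction.

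This argument is essentially free of obstacles; the only subtlety worth flagging is the interplay between the two roles the hypothesis on component sizes plays — it is simultaneously what we are assuming (to reach a contradiction) and what guarantees each $C_j$ lies in the range $0 < \vert C_j\vert \le u\vert V(G)\vert$ over which $h_u(G)$ is defined as a minimum. (One may take $u \in (0,\tfrac12]$ throughout, which is the range relevant to Theorem~\ref{Thm:CheegerBound}.)
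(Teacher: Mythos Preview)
Your proof is correct and follows essentially the same route as the paper's: argue by contradiction, apply the $u$-Cheeger bound $\vert\partial C_j\vert \ge h_u(G)\vert C_j\vert$ to each component, and use $k$-regularity for an edge count. The only cosmetic difference is that the paper tallies edges with at least one endpoint in $V(G)\smallsetminus\supp(D)$ and shows this exceeds the total $\tfrac{k}{2}\vert V(G)\vert$, whereas you bound $\sum_j\vert\partial C_j\vert$ directly by $k\vert S\vert$; the two computations are algebraically equivalent.
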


\begin{proof}
Assume that every connected component of $V(G) \smallsetminus \supp(D)$ has size less than or equal to $u \vert V(G) \vert$.  Let $U$ be a connected component of $V(G) \smallsetminus \supp(D)$.  Note that the number of edges with at least one end in $U$, here denoted $e(U)$, is
\[
e(U) = \frac{1}{2} (k \vert U \vert + \vert \partial U \vert ) \geq \frac{k+h_{u}(G)}{2} \vert U \vert.
\]
To see this, note that the number of half-edges with one end in $U$ is exactly $k \vert U \vert$.  Since the internal edges of $U$ contribute twice to this sum and the edges that leave $U$ contribute only once, we arrive at the equality above. The inequality is given by the definition of the $u$-Cheeger constant.

Let $e(V(G)\smallsetminus \supp(D))$ denote the number of edges in the complement of the support of $D$. We then have
\[
e(V(G)\smallsetminus \supp(D)) = \sum_U e(U),
\]
where the sum is over connected components of $V(G) \smallsetminus \supp (D)$.  By the above, therefore, we have
\[
e(V(G)\smallsetminus \supp(D)) \geq \sum_U \frac{k+h_{u}(G)}{2} \vert U \vert
\]
\[
 = \frac{k+h_{u}(G)}{2} \vert V(G) \smallsetminus \supp(D) \vert
\]
\[
 > \frac{k+h_{u}(G)}{2}\left(1 - \frac{h_{u}(G)}{k+h_{u}(G)}\right)\vert V(G) \vert = \frac{k}{2} \vert V(G) \vert.
\]

Since the total number of edges in $G$ is $\frac{k}{2} \vert V(G) \vert$, this is impossible.  Therefore, there exists a connected component of the complement of $\supp (D)$ of size greater than $u \vert V(G) \vert$.
\end{proof}

Together, Theorem \ref{Thm:LowerBound} and Lemma \ref{Lem:CheegerRegular} yield a lower bound on the gonality of regular graphs in terms of the Cheeger constant.


\begin{proof}[Proof of Theorem~\ref{Thm:CheegerBound}]
By Lemma \ref{Lem:CheegerRegular}, for any effective divisor $D$ with $\deg(D) < \frac{h_{u}(G)}{k+h_{u}(G)} \vert V(G) \vert$, there exists a connected component of $V(G) \smallsetminus \supp (D)$ of size greater than $u \vert V(G) \vert$.  Since all effective divisors of this degree have this property, we see that
\[
\frac{h_{u}(G)}{k+h_{u}(G)} \vert V(G) \vert \leq B_u (G) .
\]
The result then follows from Theorem \ref{Thm:LowerBound}.
\end{proof}


\begin{proof}[Proof of Theorem~\ref{Thm:RandomThreeRegular}]
By \cite{KM93}, the Cheeger constant of a random 3-regular graph is asymptotically almost surely bounded below by $\frac{1}{4.95}$.  Similarly, \cite[Theorem~3]{Kolesmald} provides lower bounds above which the $u$-Cheeger constant of a random regular graph is bounded asymptotically almost surely.  For 3-regular graphs, this bound is optimized for $u \approx 0.36$, at which point it is $0.24$.  By Theorem \ref{Thm:CheegerBound}, therefore, the gonality of a random 3-regular graph is asymptotically almost surely greater than or equal to
\[
\min \left\{ \frac{0.24}{3.24} \vert V(G) \vert, \frac{0.36}{4.95} \vert V(G) \vert \right\} = 0.072 \vert V(G) \vert .
\]
\end{proof}

While our main interest is in 3-regular graphs, we note the following amusing fact.

\begin{proposition}
Let $\gon (k)$ be the supremum over all $\ell$ such that the gonality of a random $k$-regular graph on $n$ vertices is asymptotically almost surely at least $\ell n$.  Then
\[
\lim_{k \to \infty} \gon (k) \geq \frac{1}{3} .
\]
\end{proposition}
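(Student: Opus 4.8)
The plan is to apply Theorem~\ref{Thm:CheegerBound} with the single choice $u = \frac{1}{2}$, for which the $u$-Cheeger constant degenerates to the ordinary Cheeger constant: $h_{1/2}(G) = h(G)$. For a $k$-regular graph $G$ with $k \geq 3$ we have $k + h(G) \geq 2$, so $\frac{h(G)}{k+h(G)} \leq \frac{h(G)}{2}$, and the minimum in Theorem~\ref{Thm:CheegerBound} is realized by its first term. Thus Theorem~\ref{Thm:CheegerBound} simplifies to
\[
\gon(G) \;\geq\; \frac{h(G)}{k + h(G)}\,\vert V(G) \vert .
\]
Since $x \mapsto \frac{x}{k+x}$ is increasing, any asymptotically almost sure lower bound on $h(G)$ converts directly into an asymptotically almost sure lower bound on $\gon(G)/\vert V(G)\vert$.

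The only genuine input needed is the large-$k$ behaviour of the Cheeger constant of a random $k$-regular graph, namely that $h(G)/k$ tends to $\frac{1}{2}$: for every $\varepsilon > 0$ there is a $k_0(\varepsilon)$ such that for all $k \geq k_0(\varepsilon)$, a random $k$-regular graph on $n$ vertices satisfies $h(G) \geq (\tfrac{1}{2} - \varepsilon)k$ asymptotically almost surely. This is a classical fact about random regular graphs due to Bollob\'as \cite{Bollobas}; alternatively it follows from the Cheeger inequality $h(G) \geq \tfrac{1}{2}\lambda_2$ together with the known asymptotically almost sure lower bound on the algebraic connectivity $\lambda_2$ of a random $k$-regular graph. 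Combining this with the displayed inequality, for $k \geq k_0(\varepsilon)$ a random $k$-regular graph asymptotically almost surely satisfies
\[
\gon(G) \;\geq\; \frac{(\tfrac{1}{2} - \varepsilon)k}{k + (\tfrac{1}{2} - \varepsilon)k}\,\vert V(G) \vert \;=\; \frac{\tfrac{1}{2} - \varepsilon}{\tfrac{3}{2} - \varepsilon}\,\vert V(G) \vert .
\]
By the definition of $\gon(k)$ this gives $\gon(k) \geq \frac{1/2 - \varepsilon}{3/2 - \varepsilon}$ for all $k \geq k_0(\varepsilon)$, hence $\liminf_{k \to \infty} \gon(k) \geq \frac{1/2 - \varepsilon}{3/2 - \varepsilon}$; letting $\varepsilon \to 0$ yields $\liminf_{k \to \infty} \gon(k) \geq \frac{1/2}{3/2} = \frac{1}{3}$, which is the assertion.

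The argument is essentially a one-line specialization of Theorem~\ref{Thm:CheegerBound}, so there is no serious obstacle. The only points requiring a little care are the order of the limits — fix $\varepsilon$, choose $k_0(\varepsilon)$, send $n \to \infty$ to obtain the asymptotically almost sure statement for each such $k$, then send $k \to \infty$ and finally $\varepsilon \to 0$ — and the minor bookkeeping that the configuration model of Section~\ref{Sec:Prelim} requires $kn$ to be even, so for odd $k$ one restricts to even $n$. One could also remark that running the same scheme with $u < \frac{1}{2}$ and a sharper asymptotically almost sure lower bound on the $u$-Cheeger constant $h_u(G)$, of the type provided by \cite{Kolesmald}, would improve the constant $\frac{1}{3}$; but $u = \frac{1}{2}$ already suffices for the stated inequality.
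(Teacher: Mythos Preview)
Your proof is correct and follows essentially the same approach as the paper: specialize Theorem~\ref{Thm:CheegerBound} at $u=\tfrac{1}{2}$ to obtain $\gon(G)\geq \frac{h(G)}{k+h(G)}\vert V(G)\vert$, then invoke Bollob\'as's result that $h(G)/k\to\tfrac{1}{2}$ asymptotically almost surely. You are somewhat more explicit about the $\varepsilon$--$k_0$ bookkeeping and the order of limits, but the argument is the same.
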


\begin{proof}
Let $i(k)$ be the supremum over all $\ell$ such that the Cheeger constant of a random $k$-regular graph is asymptotically almost surely at least $\ell k$.  By \cite{Bollobas}, $\lim_{k \to \infty} i(k) \geq \frac{1}{2}$.  It follows that
\[
\lim_{k \to \infty} \gon (k) \geq \lim_{k \to \infty} \frac{i(k)k}{k+i(k)k} = \frac{\frac{1}{2}}{1+\frac{1}{2}} = \frac{1}{3} .
\]
\end{proof}

\section{Gonality and Spectral Expansion}

Determining the Cheeger constant of a graph is an NP-complete problem \cite{Kaibel04}.  Using the algebraic connectivity, however, we can provide a lower bound on graph gonality that can be computed in polynomial time.  This bound also does not require the graph to be regular, and can therefore be applied in a more general setting than Theorem \ref{Thm:CheegerBound}.  To obtain this bound, we first prove a proposition about divisors that act as separators.

\begin{proposition}
\label{Prop:Separator}
Let $D$ be a divisor on a graph $G$ such that all connected components of $V(G) \smallsetminus \supp(D)$ have size less than $\frac{1}{2}\vert V(G) \vert$. Then there exist sets $A, B \subset V(G)$ such that $\supp(D)$ separates $A$ and $B$ and
\[
\frac{1}{3} \vert V(G) \smallsetminus \supp(D) \vert \leq \vert A \vert \leq \frac{2}{3} \vert V(G) \smallsetminus \supp(D) \vert .
\]
\end{proposition}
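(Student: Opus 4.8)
The plan is to reduce the statement to a purely combinatorial fact about the connected components of $W := V(G) \smallsetminus \supp(D)$. Write these components as $U_1, \dots, U_m$, ordered so that $\vert U_1 \vert \geq \vert U_2 \vert \geq \cdots \geq \vert U_m \vert$. The first observation is that if $\supp(D)$ separates sets $A$ and $B$, then condition~(1) in the definition of a separator forces $A \sqcup B = V(G) \smallsetminus \supp(D) = W$, and since each $U_i$ is connected and $G$ has no edges between distinct components of $W$, condition~(2) forces every $U_i$ to lie entirely in $A$ or entirely in $B$. Conversely, for any subset $S \subseteq \{1, \dots, m\}$ the sets $A := \bigcup_{i \in S} U_i$ and $B := W \smallsetminus A$ are separated by $\supp(D)$. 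So the proposition is equivalent to producing $S$ with $\tfrac{1}{3}\vert W \vert \leq \sum_{i \in S}\vert U_i \vert \leq \tfrac{2}{3}\vert W \vert$; note that once $\vert A \vert$ lies in this window, so does $\vert B \vert = \vert W \vert - \vert A \vert$, which is the symmetric form that will feed into Lemma~\ref{Lemma:Eig}.

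To construct $S$, I would run a greedy procedure: add components to $S$ in order of decreasing size, stopping the first time the running total $\sum_{i \in S}\vert U_i \vert$ reaches or exceeds $\tfrac{1}{3}\vert W \vert$, and call $U_j$ the component whose addition crossed the threshold. Then $\vert A \vert \geq \tfrac{1}{3}\vert W \vert$ by construction, and since the total was strictly below $\tfrac{1}{3}\vert W \vert$ just before $U_j$ was added, $\vert A \vert < \tfrac{1}{3}\vert W \vert + \vert U_j \vert$. If $j \geq 2$, then $U_1$ was added before $U_j$, so $\vert U_1 \vert < \tfrac{1}{3}\vert W \vert$; by the decreasing ordering $\vert U_j \vert \leq \vert U_1 \vert < \tfrac{1}{3}\vert W \vert$, and hence $\vert A \vert < \tfrac{2}{3}\vert W \vert$, exactly as needed.

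The case $j = 1$ --- when the largest component alone already satisfies $\vert U_1 \vert \geq \tfrac{1}{3}\vert W \vert$ --- is the crux, and the point where the size hypothesis must enter. Any $S$ avoiding the index $1$ has $\sum_{i \in S}\vert U_i \vert \leq \vert W \vert - \vert U_1 \vert$, while any $S$ containing it has $\sum_{i \in S}\vert U_i \vert \geq \vert U_1 \vert$, so an admissible $S$ exists precisely when $\vert U_1 \vert \leq \tfrac{2}{3}\vert W \vert$, in which case $S = \{1\}$ works. The hypothesis $\vert U_1 \vert < \tfrac{1}{2}\vert V(G) \vert$ delivers this bound once $\vert W \vert \geq \tfrac{3}{4}\vert V(G) \vert$, i.e.\ once $\supp(D)$ meets at most a quarter of the vertices. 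I expect controlling this last case to be the main obstacle: either one sharpens the bookkeeping (observing that a dominant $U_1$ forces $\supp(D)$ itself to be large, and recording that consequence in place of the displayed inequality), or --- as seems most natural in the intended application, where $D$ is a minimum-degree divisor of positive rank so only the regime $\deg(D)$ small is relevant --- the proposition is invoked only when $\vert \supp(D) \vert$ is a small enough fraction of $\vert V(G) \vert$ that $\vert W \vert > \tfrac{3}{4}\vert V(G) \vert$ automatically and the greedy construction above goes through verbatim.
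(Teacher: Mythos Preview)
Your greedy construction is essentially the paper's argument rephrased: the paper takes $A$ to be a \emph{maximal} union of components with $\vert A\vert \leq \tfrac{2}{3}\vert W\vert$, and if $\vert A\vert < \tfrac{1}{3}\vert W\vert$ it selects a component $U$ from $B$, observes $\vert U\vert > \tfrac{1}{3}\vert W\vert$ by maximality, and replaces $A$ by $U$. Both arguments reduce to the same endgame, namely checking that the dominant component $U$ (your $U_1$) satisfies $\vert U\vert \leq \tfrac{2}{3}\vert W\vert$.

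The gap you flag in the $j=1$ case is real, and the paper's proof has it too: the paper simply records $\vert U\vert \leq \tfrac{1}{2}\vert V(G)\vert$ from the hypothesis and sets $A=U$, without ever verifying that $\tfrac{1}{2}\vert V(G)\vert \leq \tfrac{2}{3}\vert W\vert$. In fact the proposition is false as stated. Take $G$ a path on ten vertices $v_1,\dots,v_{10}$ and $\supp(D)=\{v_5,\dots,v_9\}$; then $W=\{v_1,v_2,v_3,v_4\}\cup\{v_{10}\}$ has $\vert W\vert=5$, both components have size less than $5=\tfrac{1}{2}\vert V(G)\vert$, yet every union of components has size in $\{0,1,4,5\}$, none of which lies in $[\tfrac{5}{3},\tfrac{10}{3}]$.

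Your proposed fixes are the right ones. For the application in Theorem~\ref{Thm:SpectralBound} one may either (i) add the standing assumption $\vert\supp(D)\vert\leq\tfrac{1}{4}\vert V(G)\vert$, under which $\tfrac{1}{2}\vert V(G)\vert\leq\tfrac{2}{3}\vert W\vert$ and both proofs go through verbatim, or (ii) weaken the upper bound in the conclusion to $\vert A\vert\leq\tfrac{1}{2}\vert V(G)\vert$, which is what the paper's argument actually establishes, and carry that through the quadratic inequality downstream. Either way the spectral bound survives in the regime $\deg(D)$ small, which is the only one that matters for bounding $\gon(G)$ from below.
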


\begin{proof}
Let $A$ be the maximal union of connected components of $V(G) \smallsetminus \supp(D)$ such that $\vert A \vert \leq \frac{2}{3} \vert V(G) \smallsetminus \supp(D) \vert$, and let $B$ be the union of the remaining connected components.  If $\frac{1}{3} \vert V(G) \smallsetminus \supp(D) \vert \leq \vert A \vert $, then we are done.  We may therefore suppose that $\vert A \vert < \frac{1}{3} \vert V(G) \smallsetminus \supp(D) \vert$.  In this case, we move a component $U$ of $B$ into $A$.

Note that, by our assumption that $A$ is maximal, we must have $\vert A \cup U \vert > \frac{2}{3} \vert V(G) \smallsetminus \supp (D) \vert$.  Then, since each connected component of $V(G) \smallsetminus \supp(D)$ has size less than half the vertices of $G$, we see that $\frac{1}{3} \vert V(G) \smallsetminus \supp(D) \vert \leq \vert U \vert \leq \frac{1}{2} \vert V(G) \vert$.  In that case, we may let $A$ = $U$, and let $B$ be the union of the remaining connected components.
\end{proof}

We now use Proposition \ref{Prop:Separator} in conjunction with Lemma \ref{Lemma:Eig} to find a lower bound for the gonality entirely in terms of the algebraic connectivity, the maximum valence of a vertex in $G$, and $\vert V(G) \vert$.


\begin{proof}[Proof of Theorem~\ref{Thm:SpectralBound}]
Let $D$ be a divisor on $G$ of positive rank.  By Theorem \ref{Thm:CheegerBound}, there exists a representative for $D$ such that every connected component of $V(G) \smallsetminus \supp (D)$ has size less than $\frac{1}{2} \vert V(G) \vert$.  Therefore, by Proposition \ref{Prop:Separator} we have that $\supp(D)$ separates vertex sets $A$ and $B$ such that
\[
\frac{1}{3} \vert V(G) \smallsetminus \supp(D) \vert \leq \vert A \vert \leq \frac{2}{3} \vert V(G) \smallsetminus \supp(D) \vert .
\]
Let $x = \frac{\vert V(G) \vert}{\vert A \vert}$.  By assumption, $\frac{1}{3} \leq x \leq \frac{2}{3}$, and
\[
\vert A \vert \vert B \vert = x(1-x) \vert V(G) \smallsetminus \supp (D) \vert ^2 .
\]
On the interval $[\frac{1}{3}, \frac{2}{3}]$, the function $x(1-x)$ has a single critical point at $x = \frac{1}{2}$, but since this is a downward opening parabola we know this critical point is a maximum.  Therefore, $x(1-x)$ attains its minimum value at the endpoints $x = \frac{1}{3}, \frac{2}{3}$.  Thus we have that
\[
x(1-x) \vert V(G) \smallsetminus \supp (D) \vert^2 \geq \frac{2}{9}(\vert V(G) \vert - \vert \supp (D) \vert )^2 .
\]
Employing Lemma \ref{Lemma:Eig} we have
\[
|\supp (D)| \geq \frac{4\lambda_2 \vert A \vert \vert B \vert}{d \vert V(G) \vert -\lambda_2 \vert A\cup B \vert} \geq \frac{\frac{8 \lambda_2}{9}( \vert V(G) \vert - \vert \supp (D) \vert )^2}{d \vert V(G) \vert -\lambda_2( \vert V(G) \vert - \vert \supp(D) \vert )}.
\]
Solving for $\vert \supp (D) \vert$, we obtain:
\[
\lambda_2 \vert \supp (D) \vert^2 + (7\lambda_2 + 9d)\vert V(G) \vert \vert \supp (D) \vert - 8\lambda_2 \vert V(G) \vert^2 \geq 0 .
\]
We may now apply the quadratic formula.
\[
\vert \supp(D) \vert \geq \frac{1}{2\lambda_2} \left[ -(7\lambda_2 + 9d)\vert V(G) \vert + \sqrt{(7\lambda_2 + 9d)^2 \vert V(G) \vert^2 + 32\lambda_2^2 \vert V(G) \vert^2} \right] .
\]
\[
= \frac{\vert V(G) \vert}{2\lambda_2} \left[ -(7\lambda_2 + 9d) + 3\sqrt{9\lambda_2^2 + 14d\lambda_2 + 9d^2} \right] .
\]
\end{proof}

Note that the Laplacian of a graph can be constructed easily given a set of vertices and edge relations, and using the QR algorithm, eigenvalues of the Laplacian can be approximated to any degree of accuracy in $O(n^{3})$ time. This means that given an arbitrary graph, we can use Theorem \ref{Thm:SpectralBound} to obtain a lower bound on the gonality in a relatively small amount of time.

Although it is not as good as our previous bound, we record here what Theorem \ref{Thm:SpectralBound} can tell us about random 3-regular graphs.

\begin{corollary}
Let $G$ be a random 3-regular graph. Then, for any $\epsilon > 0$, we have
\[
\gon(G) \geq (0.0486 - \epsilon ) \vert V(G) \vert
\]
asymptotically almost surely.
\end{corollary}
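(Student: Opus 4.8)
The plan is to apply Theorem~\ref{Thm:SpectralBound} with $d = 3$ and substitute an asymptotically almost sure lower bound for the algebraic connectivity $\lambda_2$ of a random $3$-regular graph. Since $G$ is $3$-regular we have $L(G) = 3I - A(G)$, so the eigenvalues of $L(G)$ are precisely the numbers $3 - \mu$ with $\mu$ an eigenvalue of the adjacency matrix $A(G)$; the largest eigenvalue of $A(G)$ equals $3$ and contributes the eigenvalue $0$ of $L(G)$, hence $\lambda_2 = 3 - \mu_2$, where $\mu_2$ is the second-largest eigenvalue of $A(G)$. By Friedman's theorem, the resolution of Alon's second eigenvalue conjecture, a random $3$-regular graph asymptotically almost surely satisfies $\mu_2 \leq 2\sqrt{2} + \delta$ for every $\delta > 0$; as with the Cheeger-constant estimate used in the proof of Theorem~\ref{Thm:RandomThreeRegular}, one should note that this statement holds in (or transfers to) the model of Section~\ref{Sec:Prelim}. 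Consequently $\lambda_2 \geq 3 - 2\sqrt{2} - \delta$ asymptotically almost surely, and since $3 - 2\sqrt{2} \approx 0.1716 > 0$ this lower bound is positive for $\delta$ small.

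Next I would observe that the right-hand side of Theorem~\ref{Thm:SpectralBound}, viewed as a function
\[
f(\lambda) = \frac{1}{2\lambda} \left[ -(7\lambda + 9d) + 3\sqrt{9\lambda^2 + 14d\lambda + 9d^2} \right]
\]
of $\lambda = \lambda_2$ with $d$ held fixed, is increasing on $(0, \infty)$. This is a short computation: writing $S = \sqrt{9\lambda^2 + 14d\lambda + 9d^2}$, one obtains
\[
f'(\lambda) = \frac{3d \, (3S - 7\lambda - 9d)}{2 \lambda^2 S},
\]
and $3S - 7\lambda - 9d > 0$ because $9 S^2 - (7\lambda + 9d)^2 = 32 \lambda^2 > 0$; the same inequality shows that the bound of Theorem~\ref{Thm:SpectralBound} is always strictly positive. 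In particular, replacing $\lambda_2$ by the smaller quantity $3 - 2\sqrt{2} - \delta$ only weakens the bound, so asymptotically almost surely
\[
\gon(G) \geq f(3 - 2\sqrt{2} - \delta) \, \vert V(G) \vert .
\]

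Finally, a direct evaluation with $d = 3$ gives $f(3 - 2\sqrt{2}) \approx 0.0487 > 0.0486$, and $f$ is continuous, so given $\epsilon > 0$ one chooses $\delta > 0$ small enough that $f(3 - 2\sqrt{2} - \delta) \geq 0.0486 - \epsilon$; combined with the displayed inequality this yields $\gon(G) \geq (0.0486 - \epsilon)\vert V(G) \vert$ asymptotically almost surely, as claimed. The substantive input — and the one place where care is required — is the spectral estimate: one must use the sharp bound $\mu_2 \leq 2\sqrt{d-1} + \delta$, since cruder spectral-gap bounds, or the bound $\lambda_2 \geq h(G)^2/(2k)$ obtainable from the Cheeger inequality, give values far below $0.0486$, and one should record that Friedman's theorem applies to the random-graph model used here. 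The monotonicity of $f$ and the numerical inequality $f(3 - 2\sqrt{2}) > 0.0486$ are then routine.
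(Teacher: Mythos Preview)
Your proof is correct and follows essentially the same approach as the paper: invoke Friedman's theorem to get $\lambda_2 \geq 3 - 2\sqrt{2} - \delta$ asymptotically almost surely, then plug into Theorem~\ref{Thm:SpectralBound}. Your argument is in fact more careful than the paper's, which substitutes $\lambda_2 = 3 - 2\sqrt{2}$ directly without verifying the monotonicity of the bound in $\lambda_2$; your derivative computation and the observation $9S^2 - (7\lambda + 9d)^2 = 32\lambda^2 > 0$ fill that gap cleanly.
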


\begin{proof}
By \cite{Friedman08}, for any $\epsilon > 0$, the algebraic connectivity of a random $k$-regular graph is asymptotically almost surely bounded below by $k - 2\sqrt{k-1} - \epsilon$.  Taking $k = 3$, we evaluate the right hand side of the inequality in Theorem \ref{Thm:SpectralBound} to obtain:
\[
\gon(G) \geq \frac{\vert V(G) \vert}{2(3-2\sqrt{2})} \left[-(7(3-2\sqrt{2}) + 27) + 3\sqrt{9(3 - 2\sqrt{2})^{2} + 42(3-2\sqrt{2}) + 81} \right]
\]
\[
= 0.0486 \vert V(G) \vert .
\]
\end{proof}

\section{An Example}

As an example, we compute these bounds for the Pappus graph pictured in Figure \ref{Fig:Pappus}.  The Pappus graph is notable for having the largest Cheeger constant of any 3-regular graph with at most 20 vertices.  As such, it is a good candidate for testing our results.

\begin{figure}
\begin{tikzpicture}[
vertex_style/.style={circle,ball color=black},
edge_style/.style={ultra thick, black}]

\useasboundingbox (-5.05,-5.3) rectangle (5.1,5.25);

\begin{scope}[rotate=90]
   \foreach \x/\y in {0/1,60/2,120/3,180/4,240/5,300/6}{
      \node[vertex_style] (\y) at (canvas polar cs: radius=1.25cm,angle=\x){};
   }
   \foreach \x/\y in {0/7,60/8,120/9,180/10,240/11,300/12}{
      \node[vertex_style] (\y) at (canvas polar cs: radius=2.5cm,angle=\x){};
   }

   \foreach \x/\y in {0/13,60/14,120/15,180/16,240/17,300/18}{
      \node[vertex_style] (\y) at (canvas polar cs: radius=.625cm,angle=\x){};
   }

\end{scope}

\foreach \x/\y in {13/16,14/17,15/18}{
   \draw[edge_style] (\x) -- (\y);
}

\foreach \w/\x/\y/\z in {1/7/14/18,2/8/15/13,3/9/16/14,4/10/17/15,5/11/18/16,6/12/13/17}{
   \draw[edge_style] (\w) -- (\x);
   \draw[edge_style] (\w) -- (\y);
   \draw[edge_style] (\w) -- (\z);
}

\foreach \x/\y in {7/8,8/9,9/10,10/11,11/12,12/7}{
   \draw[edge_style] (\x) -- (\y);
}
\end{tikzpicture}
\caption{The Pappus graph}
\label{Fig:Pappus}
\end{figure}
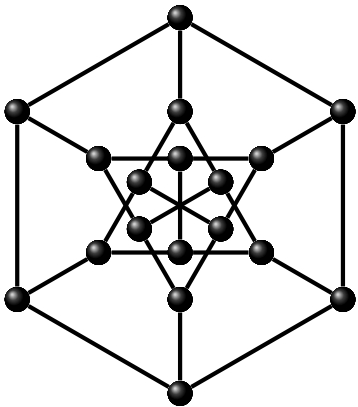

As a first estimate of the gonality of the Pappus graph, we apply Theorem \ref{Thm:LowerBound}.  The $u$-Cheeger constants for the Pappus graph are provided in Table \ref{Table}.  Evaluating the expression
\[
\min \left\{ \frac{h_{u}(G)}{3+h_{u}(G)} \vert V(G) \vert ,h(G)u \vert V(G) \vert \right\}
\]
for all $u$ in Table \ref{Table}, we find that the bound given by Theorem \ref{Thm:LowerBound} is optimized for $u = \frac{1}{3}$, giving a bound of $\frac{1}{4}\vert V(G) \vert$.  Thus $\gon (G) \geq 4.5$.

\begin{table}
\label{Table}
  \centering
    \begin{tabular}{|c|c|}
    \hline
    $u$ & $h_{u}(G)$\\
    \hline \hline
    $1/18$ & $3$\\
    \hline
    $2/18$ & $2$\\
    \hline
    $3/18$ & $5/3$\\
    \hline
    $4/18$ & $3/2$\\
    \hline
    $5/18$ & $7/5$\\
    \hline
    $6/18$ & $1$\\
    \hline
    $7/18$ & $1$\\
    \hline
    $8/18$ & $1$\\
    \hline
    $9/18$ & $7/9$ \\
    \hline
    \end{tabular}
  \caption{u-Cheeger constants of the Pappus graph}\label{table}
\end{table}

A better estimate is provided by Theorem \ref{Thm:SpectralBound}.  The second smallest eigenvalue of the Pappus graph is $\lambda_2 = 3-\sqrt{3}$.  We may now evaluate the bound given by Theorem \ref{Thm:SpectralBound}.
\[
\gon(G) \geq \frac{18}{2(3-\sqrt{3})} \left[ -7(3-\sqrt{3} + 27)+3\sqrt{9(3-\sqrt{3})^{2} + 42(3-\sqrt{3}) + 81} \right] = 5.04.
\]
Since the gonality is an integer, we now know that the gonality of the Pappus graph is at least 6.  It is expected that this is the highest possible gonality of a graph of genus 10.  We now show that the gonality of the Pappus graph is exactly 6.

\begin{proposition}
Let $G$ be the Pappus graph. Then $\gon(G) = 6$.
\end{proposition}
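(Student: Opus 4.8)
The inequality $\gon(G)\geq 6$ is already in hand: Theorem~\ref{Thm:SpectralBound} applied with $\lambda_2=3-\sqrt3$ gives $\gon(G)\geq 5.04$, and the gonality is an integer. So the remaining task is to exhibit an effective divisor $D$ of degree $6$ with $r(D)\geq 1$, which forces $\gon(G)\leq 6$. The plan is to write down such a $D$ explicitly and then verify positive rank via the reduced-divisor criterion recalled in Section~\ref{Sec:Prelim}: $r(D)\geq 1$ if and only if, for every vertex $v$, the $v$-reduced divisor linearly equivalent to $D$ has positive coefficient at $v$.

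Two heuristics guide the choice of $D$. First, $D$ should be as spread out over $V(G)$ as possible; concentrated candidates are counterproductive, since divisors such as $6(v)$, or $2(p)+2(q)+2(r)$ with $\{p,q,r\}$ an independent triple of inner vertices, turn out to be $w$-reduced for some vertex $w$ with $D(w)=0$, hence of rank $0$ (one sees this by a single run of Dhar's burning algorithm). Second, $D$ should be invariant under a subgroup $H\leq\operatorname{Aut}(G)$: the Pappus graph has automorphism group of order $216$ acting transitively on the vertices, so choosing $D$ with a large stabilizer leaves only a handful of $H$-orbits of vertices to test. With such a $D$ fixed, the verification proceeds orbit by orbit. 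For $v\in\supp(D)$ there is nothing to check, since $D-v$ is already effective. For each orbit representative $v\notin\supp(D)$, run Dhar's algorithm starting the fire at $v$: if the fire halts, firing the unburnt set produces an equivalent effective divisor closer to the $v$-reduced representative, and one iterates; the content of the argument is that this process terminates at a reduced divisor carrying a chip on $v$, rather than burning the entire graph (which would force $D(v)>0$ and hence rank $0$). Combining $\gon(G)\geq 6$ with $\gon(G)\leq 6$ yields $\gon(G)=6$.

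The main obstacle is locating the divisor $D$ itself: a degree-$6$ divisor of positive rank on $G$ does exist — that is exactly the assertion being proved — but the first candidates one writes down almost always have rank $0$, so some experimentation, guided by repeated use of the burning algorithm, is required. Once a suitable $D$ and a suitable stabilizer subgroup $H$ are in place, the rest is a short, essentially mechanical computation with Dhar's algorithm on the few orbit representatives.
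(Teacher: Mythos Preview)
Your strategy is exactly the paper's: cite the spectral lower bound for $\gon(G)\geq 6$, then exhibit a degree-$6$ divisor of positive rank and verify it via Dhar's algorithm, using the large automorphism group to cut down the vertices that need checking. The gap is that you never actually produce the divisor. Everything you write is a description of how one would go about finding and verifying $D$, together with the honest admission that ``the main obstacle is locating the divisor $D$ itself'' and that ``some experimentation \ldots\ is required.'' But that experimentation \emph{is} the proof of the upper bound; without a specific $D$ and the accompanying burning-algorithm runs, nothing has been shown.

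For the record, the paper takes $D$ to be the sum of the six vertices in the middle ring (one chip each). This choice has the full dihedral symmetry of the picture, so the three rings are the orbits one must check. For $v$ in the middle ring, $v\in\supp(D)$ and there is nothing to do. For $v$ in the outer ring, firing the complement of the outer ring (i.e.\ the inner and middle rings) pushes all six chips outward, one to each outer vertex. For $v$ in the inner ring, start the fire at $v$: it burns $v$ and the unique inner vertex adjacent to $v$, but each of these two inner vertices meets the rest of the graph only through middle-ring vertices, each of which carries a chip and has a single burnt edge, so the fire stops; firing the unburnt set then places a chip on $v$. That is the entire verification, and it is what your proposal is missing.
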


\begin{proof}
Let $D$ be the divisor on the Pappus graph illustrated in Figure \ref{Fig:Divisor}.  Note that if we fire every vertex in the complement of the outer ring once, these chips will migrate to the outer ring.  Therefore it suffices to show that we can get at least one chip to any vertex in the innermost ring.  By symmetry it suffices to check that we can get a chip to a single vertex in the innermost ring.  To do this, we will employ Dhar's burning algorithm on $D$, with $v$ being a vertex in the innermost ring.

Now, the fire burns the edge connecting $v$ to an adjacent vertex in the inner ring.  Each of these vertices is connected to two distinct vertices in the middle ring by one edge each. Since the support of our divisor is the middle ring, the fire must stop having only burned $v$ and the adjacent vertex, so we fire all other vertices once. This chip firing move puts a chip on $v$, which shows that $D$ has rank at least 1.
\end{proof}

\begin{figure}
\begin{tikzpicture}[
vertex_style/.style={circle,ball color=black},
edge_style/.style={ultra thick, black},
chip_style/.style ={circle, ball color = white}]
\useasboundingbox (-5.05,-5.3) rectangle (5.1,5.25);

\begin{scope}[rotate=90]
   \foreach \x/\y in {0/1,60/2,120/3,180/4,240/5,300/6}{
      \node[chip_style] (\y) at (canvas polar cs: radius=1.25cm,angle=\x){};
   }
   \foreach \x/\y in {0/7,60/8,120/9,180/10,240/11,300/12}{
      \node[vertex_style] (\y) at (canvas polar cs: radius=2.5cm,angle=\x){};
   }

   \foreach \x/\y in {0/13,60/14,120/15,180/16,240/17,300/18}{
      \node[vertex_style] (\y) at (canvas polar cs: radius=.625cm,angle=\x){};
   }

\end{scope}

\foreach \x/\y in {13/16,14/17,15/18}{
   \draw[edge_style] (\x) -- (\y);
}

\foreach \w/\x/\y/\z in {1/7/14/18,2/8/15/13,3/9/16/14,4/10/17/15,5/11/18/16,6/12/13/17}{
   \draw[edge_style] (\w) -- (\x);
   \draw[edge_style] (\w) -- (\y);
   \draw[edge_style] (\w) -- (\z);
}

\foreach \x/\y in {7/8,8/9,9/10,10/11,11/12,12/7}{
   \draw[edge_style] (\x) -- (\y);
}
\end{tikzpicture}
\caption{A divisor on the Pappus graph, highlighted in white}
\label{Fig:Divisor}
\end{figure}
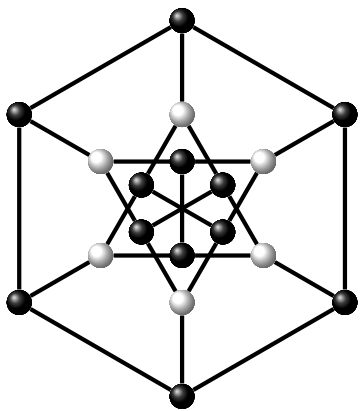

\bibliographystyle{alpha}
\bibliography{Paperbib}

\end{document}